\theoremstyle{plain}
\newtheorem{theorem}{Theorem}
\newtheorem{proposition}[theorem]{Proposition}
\theoremstyle{definition}
\newtheorem{assumption}{Assumption}
\theoremstyle{remark}
\newcommand{\coloneq}{\stackrel{\textup{\tiny def}}{=}}
\newcommand{\iid}{i.\@i.\@d.\@ }
\newcommand{\eg}{\emph{e.g.}, }
\newcommand{\ie}{\emph{i.e.}, }
\providecommand{\abs}[1]{\left\lvert#1\right\rvert}
\newcommand{\E}{{\mathbb E}}
\newcommand{\R}{{\mathbb R}}
\newcommand{\neww}[1]{#1}
\DeclareMathOperator{\avg}{avg}
\DeclareMathOperator{\proj}{proj}
\DeclareMathOperator{\prob}{prob}
\renewcommand{\leq}{\leqslant}
\renewcommand{\geq}{\geqslant}
\begin{document}

\title{Distributionally Robust Optimisation \\ in Congestion Control}

\author{
Jakub Mare{\v c}ek$^{1}$\thanks{{\tt jakub.marecek@ie.ibm.com}}, Robert Shorten$^{2}$, Jia Yuan Yu$^{3}$\\[3mm]
$^{1}$ IBM Research, Ireland \\
$^{2}$ University College Dublin, Ireland \\
$^{3}$ Concordia University, Canada
}

\maketitle

\abstract{
 \neww{The effects of real-time provision of travel-time
 information on the behaviour of drivers are considered.
The model of Marecek et al. 
[Int. J. Control 88(10), 2015]
is extended to consider uncertainty in the response of a driver to an interval
provided per route.
Specifically, it is suggested that one can optimise 
over all distributions of a random variable associated with the driver's response
with the first two moments fixed, and for each route, over the sub-intervals 
 within the minimum and maximum in a certain number of previous realisations of the travel time per the route. 
} 
}

\section{Introduction}

Congestion on the roads is often due to drivers using them in a
synchronized manner, ``a wrong road at a wrong time''.
Intuitively, the synchronisation is partly due to the reliance on 
the same unequivocal information about past traffic conditions,
 which the drivers mistake for a reliable forecast of future traffic conditions. 
Perhaps, if the information about past traffic conditions were provided
 in a different form, the synchronisation could be reduced.
This intuition led to a considerable interest in 
  advanced traveller information systems and models of dynamics of information provision
\cite{arnott1991does,BENAKIVA1991251,bonsall1992influence,arnott1993structural,emmerink1996information,bottom2000consistent,papageorgiou2007its,marecek2015signaling,marecek2016signaling}.
In this paper, we propose and study novel means of information provision.

With the increasing availability of satellite-positioning traces of individual cars,
 it is becoming increasingly clear that there are many approaches to 
 aggregating the information and providing them to the public,
 while it remains unclear what approach is the best. 
Following \cite{marecek2015signaling,marecek2016signaling},
 we model the relationship of information provision and road use as a partially known non-linear dynamical system.
In practice, our approach relies on a road network operator with
  up-to-date knowledge of congestion across the road network,
who broadcasts travel-time information to drivers,
  which is chosen so as to alleviate congestion,
  based on an estimate of the driver's response function, 
  e.g., up to the first two moments of some random variables involved.
\neww{In terms of theory, we study non-linear dynamics,
  which are not perfectly known.}
This poses a considerable methodological challenge.

We make first steps towards modelling the interactions among the road network operator and the drivers over time
  as a stochastic control problem and the related delay-tolerant and risk-averse means of information provision.
In an earlier paper \cite{marecek2015signaling}, we have
  studied the communication of a scalar per route at each time, specific to each driver.
In another recent paper \cite{marecek2016signaling}, 
  we have studied the communication of two scalars (an interval) per route (or road segment) at each time, 
  with the same information broadcast to all drivers.
There, the intervals were based on the minimum and maximum travel time over the segment
  within a time window.
In this paper, we propose an optimisation procedure, where one considers sub-intervals of
  the interval.
Across all three papers, we show that congestion \neww{can be} reduced by withholding some information,
 while ensuring that the information remains consistent with the true past
observations.

Let us consider the travel time over a route as a time series.
Broadcasting the most recent travel time,
 an average over a time window,
 or any other scalar function over a time window,
 may lead to a suboptimal ``cyclical outcome,'' 
 where drivers overwhelmingly pick the supposedly fastest route, 
 leading to congestion therein, and another route being announced as the fastest, \neww{only to become congested in turn}.
On the other hand, depriving the drivers of any information leads to a suboptimal outcome, 
  where each \neww{driver} acts more or less \neww{randomly}.  
We illustrate our findings on an intentionally simple model. 

\begin{table}[t]
\caption{An overview of the related work (top), our suggestion (middle), and suggestions for future work (bottom) within two-parameter route choice formulations and behaviour of the related models.}
\label{tab:int-signals}
    \centering
    \begin{tabularx}{\textwidth}{ X  p{2.6cm} p{5.1cm} p{6.18cm} }
\toprule
Ref.                         & Name                          & $\underline{u}^m_t$    & $\overline{u}^m_t$ \\ 
\midrule
\cite{marecek2015signaling}  & $(\delta, \gamma)$ & \small $c_m(n^m_{t-1}) + \nu_t^m - \delta^m /2$ & \small $c_m(n^m_{t-1}) + \nu_t + \delta^m /2$ \\ 
\cite{marecek2016signaling}  & $r$-extreme        & \small $\arg \min_{j=t-r,\ldots,t-1} \{ c_m(n_j^m) \}$   &  \small $\arg \max_{j=t-r,\ldots,t-1} \{ c_m(n_j^m) \} $ \\ 
\cite{Epperlein2016}                         & smoothing         & \small $q_1 \underline{u}^m_{t-1} + (1-q_1)c_m(n_{t-1}^m)$ & \small $q_2 \overline{u}_{t-1}^m + (1-q_2)\,\bigl|c_m(n_{t-1}^m)- \underline{u}^m_{t-1} \bigl|$       \\ 
\cite{Nikolova2014}  & mean and STD            & \small $\frac{1}{r} \sum_{j=t-r,\ldots,t-1} c_m(n_j^m)$   &  \small $\frac{1}{r} \sum_{j=t-r,\ldots,t-1} (c_m(n_j^m) - \underline{u}^m_{t})^2$  \\[3mm] 
\midrule
 & $r$-supported      & \small $\proj_{\underline{u}^m_t} \arg \min_{ (\underline{u}^m_{t}, \overline{u}^m_{t}) \in P(S_t, \Omega)} C(n_t)$ & \small $\proj_{\overline{u}^m_t} \arg \min_{ (\underline{u}^m_{t}, \overline{u}^m_{t}) \in P(S_t, \Omega)} C(n_t)$ \\[3mm] 
\midrule
  & mean, VaR        & \small $\frac{1}{r} \sum_{j=t-r,\ldots,t-1} c_m(n_j^m)$   &  \small VaR$_\alpha \coloneq \inf\{l \in \mathbb{R}: \prob(L>l)\le 1-\alpha\}$  \\ 
  & mean, CVaR            & \small $\frac{1}{r} \sum_{j=t-r,\ldots,t-1} c_m(n_j^m)$   &  \small CVaR$_\alpha \coloneq \frac{1}{\alpha}\int_0^{\alpha} \mbox{VaR}_{\gamma} d\gamma $ \\ 
\bottomrule
    \end{tabularx}\\[2mm]
\end{table}  

\section{Related work}\label{sec:mod}

Recent studies \cite{marecek2015signaling,marecek2016signaling,Epperlein2016} have focussed on 
a dynamic discrete-time model of congestion, where a
finite population of $N$ drivers is confronted with $M$ alternative routes
at every time step. 
The time horizon is discretized into discrete periods $t=1,2,\ldots$.  
At each time, each driver picks exactly one route, and is hence ``atomic''.
Let $a_t^i$ denote the choice of driver $i$ at time $t$ and $n^m_t = \sum_i 1_{[a_t^i = m]}$ be the
number of drivers choosing route $1 \le m \le M$ at time $t$.
Sometimes, we use $n_t$ to denote the vector of $n^m_t$ for $1 \le m \le M$.
The travel time $c_m(n^m_t)$ of route $m$ at time $t$ is a function of the
number $n^m_t$ of drivers that pick $m$ at time $t$, 
$c_m: \mathbb N \to \mathbb R_+$.
The social cost $C(n_t)$ weights the travel times of the routes at
time $t$ with the proportions of drivers taking the routes, \ie
\begin{align}
  \label{eqn:socialtravel time}
  C(n_t) \triangleq \sum_{m = 1}^{M} \frac{n^m_t}{N} \cdot c_m(n^m_t).
\end{align}
Notice that in the case of two alternatives, $M = 2$, $C(n_t)$ becomes a function of $n^1_t$ only, 
with $n^2_t$ beign equal to $N - n^1_t$: 
\begin{align}
  C(n_t) = \frac{n^1_t}{N} \cdot c_1(n^1_t) + \frac{N - n^1_t}{N} \cdot c_2(N - n^1_t).
\end{align}
The social or system optimum at every
time step $t$ is $n^* \in \arg\min_{0 \le n \le N} C(n)$. 

Notice that the travel time is, in effect, a time-series,
 with a data point per passing driver. 
Often, however, one \neww{may} want to aggregate the time series,
for instance in order to communicate travel times succinctly.
Essentially, \cite{marecek2015signaling,marecek2016signaling,Epperlein2016} discuss various means of aggregating the history of travel times $c_m(n^m_{t'})$ for all $1 \le m \le M$ and for all times $t' < t$ in past relative to present $t$. 
Every driver $i$ takes route $a_t^i$
 based on the history of $s_{t'}, t' \le t$ received up to time $t$.
In keeping with control-theoretic literature, 
 a mapping of such a history to a route is called a \emph{policy}.
$\Omega$ denotes the set of all possible types of drivers and
$\mu$ a probability measure over the set $\Omega$, which describes the
distribution of the population of drivers into types.
We refer to \cite{marecek2016signaling,Epperlein2016} for the measure-theoretic definitions.

\neww{Sending of the most recent travel time or any other single scalar value per route uniformly to all drivers is not socially optimal \cite{marecek2016signaling}.
One option for addressing this issue is to vary the scalar value sent to each user.} 
\cite{marecek2015signaling} studied a scheme,  where the network operator sends a distinct $s^i_t \triangleq (y^{m,i}_t, 1 \le m \le M) \in \mathbb R^{M}$ \neww{to each driver $i$ at time $t$}, where
\begin{align}
  y^{m,i}_t \triangleq c_m(n^m_{t-1}) + w^{i,m}_t, 
\end{align}
and the sequence of random noise vectors $\{ w^{i,m}_t : t=1,2,\ldots\}$ is \iid such that for all $t$, $\E w^{i,m}_t = 0$,
and $w^{i,m}_t - w^{i,{m'}}_t$ is normally distributed with mean $0$ and
variance $\sigma^2$ for $1 \le m \not = m' \le M$.
\neww{These properties of $w^{i,m}_t$ assure that no driver is being disadvantaged over the long run,
but the absolute value of $w^{i,m}_t$ may vary across drivers $i$ at a particular time $t$.}


Considering the introduction of \neww{such driver-specific randomisation} may not be desirable, \cite{marecek2015signaling} presented a scheme that broadcasts \emph{two} \neww{distict} scalar values \neww{per} route to all drivers, \neww{where the two distinct scalars for a particular route are the same for all the drivers at a particular time}.
\neww{For $M$ routes, one has $s_t \triangleq (\underline{u}^m_t,
\overline{u}^m_t, 1 \le m \le M) \in \in \mathbb R^{2M}$, where}
\neww{
\begin{align}
  \underline{u}^m_t &\triangleq c_m(n^m_{t-1}) + \nu^m_t - \delta^m/2,\\
  \overline{u}^m_t &\triangleq c_m(n^m_{t-1}) + \nu^m_t + \delta^m/2,
  \quad m \in \{A,B\},
\end{align}
}
where $\nu_t^m$ are \iid uniform random variables with support:
\neww{
\begin{align*}
  \mbox{Supp}(\nu^m_t) &= [-\delta^m/2,\delta^m/2].
\end{align*}
}
\neww{Notice that \cite{marecek2015signaling}} use $\delta$ and $\gamma$
to denote the non-negative constants $\delta^A$ and $\delta^B$ 
\neww{in the case of $M=2$, and hence use $(\delta,\gamma)$-interval to denote such $s_t$.} 
Let $\Omega$ be a finite subset of $ = [0,1]$ and assume that 
each driver $1 \le i \le N$ is of type $\omega \in
\Omega$ and follows the policy $\pi^\omega$:
\begin{align}
  a_t^i \triangleq \pi^\omega(s_t) \triangleq \arg\min_{m=1}^{M} \; \omega \underline{u}^m_t + (1-\omega) \overline{u}^m_t.
\label{policylambda}
\end{align}
in response to $s_t$.
Observe that for $\omega = 0$, policy $\pi^0$ models a risk-averse
driver, who makes decisions based solely on $\overline{u}^m_t$.
Similarly, $\pi^1$ and $\pi^{1/2}$ model risk-seeking and risk-neutral drivers, \neww{respectively}.
Under certain assumptions \neww{ bounding the modulus of continuity of functions $c_A, c_B, \ldots$, \emph{cf.} \cite{marecek2015signaling},} one can show that this results in a stable behaviour of the system.

Considering that \neww{\emph{any}} randomisation may \neww{be} undesirable, \cite{marecek2016signaling} suggested broadcasting a deterministically chosen interval for each route.
In one such approach, called $r$-extreme \cite{marecek2016signaling}, one simply broadcasts the maximum and minimum travel time \neww{within} a time window of  $r$ most recently observed travel times.
In another variant, called exponential smoothing \cite{Epperlein2016}, one broadcasts a weighted combination of the current travel-time \neww{and past travel times, alongside a weighted combination of the} current variance \neww{of the travel times} and the previously sent information \neww{about the variance}.
Under \neww{some additional} assumptions, one can analyse the resulting stochastic \neww{(delay)} difference equations\neww{:}
Using \neww{results} developed \neww{in the} theory of iterated random functions \cite{IFS}, \cite{marecek2016signaling} show that the $r$-extreme schema \neww{yields ergodic behaviour when the distribution 
of types of drivers changes over time in a memory-less fashion}.
\cite{Epperlein2016} extended the result to populations, whose evolution is governed by a Markov chain, \neww{which allows, \emph{e.g.}, for different distributions
at different times of the day, such as at night, during the morning and afternoon peaks, and all other times.}
In Table~\ref{tab:int-signals}, we present an overview of these schemata.

We should like to stress that the above is not a comprehensive overview of related work.
We refer to \cite{arnott1991does,BENAKIVA1991251,bonsall1992influence,arnott1993structural,emmerink1996information}
for pioneering studies in the field as well as to \cite{bottom2000consistent,papageorgiou2007its} for extensive, book-length 
overviews of further related work.

\section{Distributionally robust optimisation}

In this paper, we suggested broadcasting a deterministically chosen interval for each route, 
where the deterministic choice is based on optimisation over subintervals of the 
\neww{interval given by the minimum and maximum over a time window of a finite, fixed length $r$.} 
For $1 < r < t$, we define $s_{t} =
(\underline u_{t}^1, \overline u_{t}^1, \underline u_{t}^2, \overline u_{t}^2, \ldots, \underline u_{t}^M, \overline u_{t}^M, )$ to be \emph{$r$-supported}, whenever 
\begin{align}
\label{eq:supported1}
  \min_{j=t-r,\ldots,t} \{ c_m(n^m_j) \} \le \underline u_{t}^m < \overline u_{t}^m \leq \max_{j=t-r,\ldots,t} \{ c_m(n^m_j) \}.
\end{align}
Notice that $r$-extreme $s_t$ is a special case of $r$-supported $s_t$.
To study the effects \neww{of} broadcasting $r$-supported $s_t$, we need to formalise the 
\neww{model} of the population. Clearly, one can start with:

\begin{assumption}[Full Information]\label{as:fixed}
Let us assume that $\Omega$ is a finite set. Further, let us assume the number of drivers of type $\omega$ at time $t+1$ is $N \mu_{t+1}(\omega)$ and that $N \mu_{t+1}(\omega)$ 
is known to the network operator at time $t$.
\end{assumption}

Assumption~\ref{as:fixed} is very restrictive.  Instead, we may want to assume \neww{that} $\mu_t$ \neww{are} independently identically distributed (i.i.d.) samples of \neww{a} random variable. 
\footnote{\neww{One could go further still and assume time-varying distributions of $\mu_t$, or more general structures, yet. We refer to \cite{Epperlein2016} for an example, but note that such assumptions do not allow for the efficient application of methods of computational optimisation, in general. In this paper, we hence consider the i.i.d. assumption.}} 
In the tradition of robust optimisation \cite{Soyster1974}, one could \neww{assume that a support of the random variable is known and} optimise \neww{social cost} over \neww{all possible distributions of the random variable
with the given support}. That approach, however, tends to produce
overly conservative solutions, when it produces any feasible solutions at all.  
In the tradition of distributionally robust optimisation \neww{(DRO)} \cite{bertsimas2010models,delage2010distributionally},
one could \neww{assume that a certain number of moments of the random variable are known and} optimise \neww{social cost} over \neww{all possible distributions of the random variable with the given moments}. \neww{We suggest to use DRO with the first} two moments:\\[1mm]

\begin{assumption}[Partial Information]\label{as:pop}
Let us assume that $\Omega$ is a finite set. Let us assume the number of drivers of type $\omega$ at time $t+1$ is $N \mu_{t+1}(\omega)$, but that the distribution of $\mu_{t+1}$ is unknown
at time $t$, except for the first two moments of the distribution of $\mu_{t+1}$, denoted $E, Q$:
\begin{align}
E &=
\begin{pmatrix}
E_1  \\
E_2  \\
\vdots \\
E_{|\Omega|} \\
\end{pmatrix}
= \E \begin{pmatrix}
\mu_{t+1}(1)  \\
\vdots \\
\mu_{t+1}(\abs{\Omega}) \\
\end{pmatrix} \\
Q &= 
\begin{pmatrix}
Q_{11} & \ldots & Q_{1|\Omega|} \\
\vdots & & \vdots \\
Q_{|\Omega|1} & \ldots & Q_{|\Omega||\Omega|}\\
\end{pmatrix}
= 
\E \left[ \begin{pmatrix}
\mu_{t+1}(1)  \\
\mu_{t+1}(2)  \\
\vdots \\
\mu_{t+1}(|\Omega|) \\
\end{pmatrix}
\begin{pmatrix}
\mu_{t+1}(1)  \\
\mu_{t+1}(2)  \\
\vdots \\
\mu_{t+1}(|\Omega|) \\
\end{pmatrix}^T
\right] 
\end{align}
 and let us assume $E, Q$ are known to the network operator at time $t$.
\end{assumption}

Notice that Assumption~\ref{as:pop} is much more reasonable \neww{than Assumption~\ref{as:fixed}. 
The authorities can compute an unbiased estimate of} the first two moments 
using \neww{readily-available} statistical 
estimation techniques \cite{toledo2004calibration,vaze2009calibration}. 
\neww{In contrast, ascertaining the actual realisation of the random variable in 
real time seems impossible, and 
estimating more than two moments of a multi-variate random variable remains a challenge,
as the requisite number of samples grows exponentially with the order of the moment,
which in turn makes the computations prohibitively time consuming.
In short, we believe that Assumption~\ref{as:pop} presents a suitable trade-off between realism
and practicality.  
}

Next, one needs to decide on the objective, which should be optimised. 
Clearly, even a finite-horizon approximation of the accumulated social cost is a challenge.
Beyond that, we can show a yet stronger negative result:\\[2mm]

\begin{proposition}[Undecidability]
\label{undecidable}
   Under Assumption~\ref{as:fixed},
   there exist $c_A, c_B$, and an initial $s_1$ broadcast, such that it is undecidable whether iterates $s_t \in \R^n$, $n \ge 2$ induced by 
   policies $\pi^\omega$ responding to intervals broadcast converge to a point  
   $s_T \in \R^n$ from $s_1$, such that $s_t$ for all $t > T$ is equal to $s_T$.
\end{proposition}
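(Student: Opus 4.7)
The plan is to reduce a classical undecidable problem---concretely, the halting problem for two-counter (Minsky) machines, or equivalently the reachability problem for low-dimensional piecewise-affine continuous maps (Koiran--Cosnard--Garzon)---to the convergence question in the statement. The key structural observation is that, under Assumption~\ref{as:fixed}, the map $s_t \mapsto s_{t+1}$ is piecewise-defined on $\R^{n}$. Since $\Omega$ is finite and $\pi^\omega(s_t) = \arg\min_m [\omega \underline{u}^m_t + (1-\omega) \overline{u}^m_t]$ is a single-parameter linear objective in $\omega$, the drivers of type $\omega$ switch routes at finitely many thresholds, and the vector $n_t = (n_t^1,\ldots,n_t^M)$ is a piecewise-constant function of $s_t$ whose regions are determined by linear inequalities in the coordinates of $s_t$. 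Within each region, $s_{t+1}$ is obtained by applying whatever broadcasting rule is in force (\emph{e.g.}, $r$-extreme) to $c_m(n_t^m)$ and to the stored history, giving an affine (in fact piecewise-constant or piecewise-affine) function of $s_t$.

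First I would formalise this piecewise structure by folding the $r$-step history into an enlarged state of dimension $n = 2Mr$ and writing the one-step dynamics as a map $\Phi: \R^{n} \to \R^{n}$ with finitely many affine pieces whose cells are polyhedral. Second, I would exploit the degrees of freedom granted by Assumption~\ref{as:fixed}: the functions $c_A, c_B$ are arbitrary, the sequence $\mu_t$ is arbitrary and known to the operator in advance, and $s_1$ is arbitrary. This gives enough flexibility to prescribe both the combinatorial region structure of $\Phi$ and the numerical updates inside each region.

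Third, I would carry out the reduction: given a two-counter machine $\mathcal{M}$, choose $c_A, c_B, \mu_t$, and $s_1$ so that $\Phi$ simulates $\mathcal{M}$, with the two counters encoded in two coordinates of the state (for instance via a bit-shift style encoding in the manner of Koiran et al.) and with $\mathcal{M}$ halting corresponding to $\Phi$ entering a distinguished absorbing fixed point. Convergence of the iterates $s_t$ to a single point $s_T$ with $s_t = s_T$ for all $t>T$ then occurs if and only if $\mathcal{M}$ halts, so decidability of this convergence would decide the halting problem, a contradiction.

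The main obstacle is the faithful embedding. The family of piecewise-affine maps realisable as $\Phi$ is constrained in two ways: the branching structure is tied to orderings of linear forms in $s_t$ inherited from $\pi^\omega$, and the values assigned inside each cell must be expressible as outputs of $c_m$ evaluated at integer inputs in $\{0,1,\ldots,N\}$. Verifying that this restricted family still admits Turing-complete dynamics is the technical heart of the argument; I would address it by simulating each step of the two-counter machine by a fixed number of iterations of $\Phi$, using additional coordinates of $s_t$ as scratch space and using the time-varying $\mu_t$ to drive the control flow of $\mathcal{M}$, then verifying the correspondence between halting configurations and fixed points of $\Phi$ step by step.
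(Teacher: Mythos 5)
Your proposal takes essentially the same route as the paper: the paper's entire proof is a one-sentence appeal to the undecidability of fixed-point reachability for iterated piecewise-affine maps on $\R^2$ (Blondel et al., Koiran et al.), combined with the observation that no assumptions are placed on the functions $c_m$ --- which is precisely the reduction you outline. If anything, your write-up is more careful than the paper's, since you explicitly flag (without fully closing) the realisability question, namely whether the constrained family of signal-to-signal maps induced by the policies $\pi^\omega$ and the integer-valued $n^m_t$ is rich enough to embed an arbitrary piecewise-affine system; the paper dispenses with this technical heart by fiat.
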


The proof is based on the results of \cite{blondel2001deciding,koiran1994computability} 
 that given piecewise affine function $g : R^2 \to R^2$ and an initial point $x_0 \in R^2$, 
it is undecidable whether iterated application $g \ldots g(x_0)$ reaches a fixed point, eventually, and the fact we make no assumptions about the functions $c_m$.  
\neww{Although Proposition~\ref{undecidable} does not rule out weak convergence guarantees in the measure-theoretic sense under Assumption~\ref{as:fixed}, for instance, some assumptions concerning the functions $c_m$ do simplify the matters considerably.}

\neww{To formulate such an assumption,} observe that the function $g$ corresponds to a composition of the social cost \eqref{eqn:socialtravel time} and the policy \eqref{policylambda}. In particular:
  \begin{align}
    \underline u^m_{t+1} & = \min \{\underline u^m_t, c_m(n^m_t)\},\notag \\
    \overline u^m_{t+1} &= \max \{\overline u^m_t, c_m(n^m_t)\}, \notag 
  \end{align}
wherein one applies $c_m$ to values of $n^m_t$:
\neww{
  \begin{align}
    n^m_t &= \sum_i 1_{(a^i_t = A_m)}  \\
    &= \sum_i \sum_{\omega \in \Omega} 1_{(a^i_t = A_m \mid \textrm{ driver } i \textrm{ is of type }
      \omega
      )} \mu_t(\omega) \notag \\
    &= N \sum_{\omega \in \Omega} 1_{ \bigwedge_{s \neq r} ( \omega
      \underline u^m_t + (1-\omega) \overline u^m_t < \omega
      \underline u^s_t + (1-\omega) \overline u^s_t )}
    \mu_t(\omega)\label{eq:29},
  \end{align}
whereby one obtains $\underline u^m_{t+1}, \overline u^m_{t+1}$ as a function of $\underline u^m_{t}, \overline u^m_{t}$. 
We refer to the proof of Theorem 1 in \cite{marecek2016signaling} for a detailed discussion of this signal-to-signal mapping
and properties of $\mu_t$.
}

\neww{One may hence obtain a signal-to-signal mapping $g$ of more desirable properties} by restricting oneself to a particular class of $c_m$, and hence to a particular class of social costs \eqref{eqn:socialtravel time}.
\neww{In particular, we restrict ourselves to}:\\[2mm]

\begin{proposition}[$C$ is Difference of Convex]
\label{prop:DCA}
   For any functions $c_m$ convex on $[0, 1]$,
   there exist solvers for the minimisation of the unconstrained social cost $C$ (cf. Eq.~\ref{eqn:socialtravel time}), 
   with guaranteed convergence to a stationary point.
\end{proposition}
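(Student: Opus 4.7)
The plan is to exhibit an explicit difference-of-convex (DC) decomposition of the unconstrained social cost $C$ and then invoke the classical DC algorithm (DCA) of Pham Dinh Tao and Le Thi Hoai An, whose convergence to stationary points under mild regularity is standard.

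First, I would decompose each summand $(n^m/N)\,c_m(n^m)$ of $C$ in \eqref{eqn:socialtravel time}. Writing $x := n^m/N \in [0,1]$ and $c := c_m$, I would use the algebraic identity
\begin{align*}
  x \, c(x) \;=\; \tfrac{1}{2}\bigl(x + c(x)\bigr)^{2} \;-\; \tfrac{1}{2}\bigl(x^{2} + c(x)^{2}\bigr).
\end{align*}
Since $c_m$ takes values in $\mathbb{R}_+$ and $x \in [0,1]$, both $x+c(x)$ and $c(x)$ are nonnegative and convex, so by the composition rule (the convex nondecreasing scalar map $t \mapsto t^2$ on $[0,\infty)$ composed with a nonnegative convex function) both $(x+c(x))^2$ and $c(x)^2$ are convex. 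Combined with the convexity of $x^2$, this exhibits $x\,c(x)$ as the difference of two convex functions on $[0,1]$.

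Next, I would assemble the global decomposition: a finite sum of DC functions is DC, so $C(n) = G(n) - H(n)$ with explicit convex $G, H$ on $[0,1]^M$. Then I would apply the standard DCA iteration: compute $y^k \in \partial H(n^k)$ and solve the convex subproblem $n^{k+1} \in \arg\min_n \{ G(n) - \langle y^k, n \rangle \}$. On the compact domain $[0,1]^M$ with continuous $G, H$, the classical convergence theorem for DCA yields that every accumulation point of the sequence $\{n^k\}$ is a stationary (critical) point of $C$, which is precisely the statement to be proved.

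The main obstacle, as I see it, is establishing the DC decomposition without assuming any smoothness on $c_m$: the proposition only postulates convexity, and one cannot directly invoke Hartman's $C^2$-DC theorem or the Lipschitz-gradient quadratic-perturbation trick. The identity above circumvents this by reducing everything to the composition rule, which requires only that $c_m$ be convex and nonnegative; nonnegativity is guaranteed by the problem set-up $c_m : \mathbb{N} \to \mathbb{R}_+$ extended to $[0,1]$. Once the DC structure is in hand, the convergence to a stationary point is a direct appeal to the DCA literature, and tractability of the convex subproblem follows because $G$ is an explicit sum of squared nonnegative convex functions.
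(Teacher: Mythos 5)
Your proposal is correct, but it takes a genuinely different and more self-contained route than the paper, which offers no explicit proof of this proposition at all: the statement is justified only by citation to the DC-programming literature, and the one decomposition the paper actually writes down appears later, inside the proof of the distributionally robust proposition. There, each term $n^m_t\, c_m(n^m_t)$ is declared convex by appeal to the composition rule for a convex nondecreasing outer function, and the terms arising from the substitution $n^M_t = 1-\sum_{i<M} n^i_t$ are split into convex and concave pieces. That argument implicitly requires $c_m$ to be nondecreasing, a hypothesis the proposition does not state; the product $x\,c(x)$ of $x$ with a nonnegative convex $c$ need not be convex (take $c(x)=1-x$ on $[0,1]$, for which $x\,c(x)$ is concave). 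Your polarization-type identity $x\,c(x)=\tfrac12\bigl(x+c(x)\bigr)^2-\tfrac12\bigl(x^2+c(x)^2\bigr)$ sidesteps this: it yields an explicit DC splitting for any nonnegative convex $c_m$, monotone or not, with nonnegativity supplied by the model ($c_m:\mathbb N\to\mathbb R_+$) or recoverable by adding a constant, which perturbs the decomposition only by a linear term. The subsequent appeal to DCA on the compact box $[0,1]^M$ is the standard argument and delivers exactly what is claimed, with two minor caveats that do not affect the conclusion: the convex subproblems should carry the indicator of $[0,1]^M$, since $c_m$ is only assumed convex there, and the guaranteed limit points are DC-critical points in the sense $\partial G(n^*)\cap\partial H(n^*)\neq\emptyset$, which is the usual reading of ``stationary point'' in this setting. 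On balance your argument is tighter than the paper's implicit one, because it proves the DC structure under precisely the stated hypothesis of convexity alone.
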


Using a wealth of results \cite{DC} on the optimisation of DC (``difference of convex'') functions,
we can show:

\begin{proposition}[The Full Information Optimum]
\label{thm:Computable}
   Under Assumption~\ref{as:fixed},
   a stationary point of: 
   \neww{ 
\begin{align}
\label{eq:Computable}
  \min_{ s_t = (\underline{u}^m_t, \overline{u}^m_t, 1 \le m \le M) \in P(S_t, \Omega)} C(n_t)
\end{align}
   \neww{can be computed up to any fixed precision} in finite time, 
   where $P \subseteq \R^{2M}$ 
   is the set of $r$-supported signals \eqref{eq:supported1}
   and functions $c_m, 1 \le m \le M$ are convex on $[0, 1]$.\\[2mm]
}
\end{proposition}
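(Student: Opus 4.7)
The plan is to reduce \eqref{eq:Computable} to a difference-of-convex programme over a compact convex polytope and then invoke the DCA convergence theory underlying Proposition~\ref{prop:DCA}.

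First, I would verify that $P(S_t,\Omega)\subseteq\mathbb{R}^{2M}$ is a non-empty compact convex polytope. From \eqref{eq:supported1}, every coordinate of $s_t$ lies between observed extrema of the $r$-window and satisfies $\underline u^m_t\le\overline u^m_t$; all these constraints are affine in $s_t$, and after closing the strict inequality---harmless because the objective is continuous and we minimise---the resulting $P$ is a closed bounded polyhedron.

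Next, I would exploit Assumption~\ref{as:fixed}: since $\mu_t$ is known, \eqref{eq:29} expresses $n^m_t$ as a piecewise-constant function of $s_t$, with cells carved by the hyperplanes $\omega(\underline u^m_t-\underline u^{s}_t)+(1-\omega)(\overline u^m_t-\overline u^{s}_t)=0$ for $\omega\in\Omega$ and $m\ne s$, yielding at most $M^{|\Omega|}$ distinct occupancy vectors $\mathcal N\coloneq\{n_t(s_t):s_t\in P\}$, enumerable from that arrangement. To embed this in the DC framework demanded by the statement, I would invoke Proposition~\ref{prop:DCA} to decompose $C=g-h$ with $g,h$ convex on $[0,1]^M$ (using convexity of each $c_m$ there), and then minimise $C=g-h$ over $\mathrm{conv}(\mathcal N)$ by the DCA of \cite{DC}; each iteration solves a convex programme over this polytope to arbitrary precision by standard convex solvers, and the classical convergence theory of \cite{DC} delivers an $\varepsilon$-stationary point in finitely many iterations.

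The principal obstacle will be reconciling the piecewise-constant map $s_t\mapsto n_t$ with the continuous, subdifferential-based notion of stationarity used in Proposition~\ref{prop:DCA}: first-order stationarity in the raw $s_t$-coordinates is vacuous since $C(n_t(s_t))$ is locally constant on each cell of the arrangement, so the meaningful notion of stationary point has to be carried over to the image $n_t$-space. Once this reformulation is in place, the finite-time, arbitrary-precision guarantee transfers verbatim from the DCA convergence analysis of \cite{DC}, completing the argument.
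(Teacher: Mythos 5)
Your reduction goes wrong at the point where you replace the feasible set in image space by its convex hull. Under Assumption~\ref{as:fixed} the map $s_t \mapsto n_t$ is, as you correctly observe, piecewise constant on the cells of the hyperplane arrangement cut out by $\omega \underline u^m_t + (1-\omega)\overline u^m_t = \omega \underline u^s_t + (1-\omega)\overline u^s_t$, so the set $\mathcal N$ of attainable occupancy vectors is finite. But the original problem asks for a minimiser of $C$ over $\mathcal N$ itself, not over $\mathrm{conv}(\mathcal N)$. A DCA run on $g-h$ over $\mathrm{conv}(\mathcal N)$ converges to a critical point that in general is not a vertex of the hull and hence corresponds to no feasible signal $s_t\in P$; nothing in the convergence theory of \cite{DC} lets you round such a point back into $\mathcal N$ while preserving either stationarity or the objective value. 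The difficulty you flag yourself --- that first-order stationarity in the $s_t$-coordinates is vacuous because $C(n_t(s_t))$ is locally constant on each cell --- is real, but passing to the convex hull does not resolve it; it silently substitutes a different optimisation problem. (Ironically, once you have enumerated the finitely many realisable cells of the arrangement, you could simply evaluate $C$ at each attainable $n\in\mathcal N$ and return the minimiser, which yields a \emph{global} optimum in finite time with no DC machinery at all; but that is not the argument you wrote down.)

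The paper takes a different route: it lifts the problem into a mixed-integer formulation, introducing binary indicators $x^{\omega}_{t,m}=1_{[\pi^{\omega}(s_t)=m]}$ together with auxiliary nonnegative variables $\underline y, \overline y$ for the positive and negative parts of the preference gaps $g^{\omega}_{i,j}$, all coupled to the signal $s_t$ through big-$Z$ constraints defining a lifted polytope $P'$. It then branches on the integer variables and applies the Lagrangian relaxation and the DC machinery of Proposition~\ref{prop:DCA} to the continuous problem at each node. The essential point is that the lifted constraints retain the exact coupling between the broadcast signal and the induced occupancy vector, which is precisely the information your convex-hull relaxation discards. If you want to keep your cell-enumeration viewpoint, replace the DCA step over $\mathrm{conv}(\mathcal N)$ by direct evaluation over the finite set $\mathcal N$; otherwise you need the lifted-polytope coupling.
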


\begin{proof}
  \label{proof3}
  Let us introduce an auxiliary indicator variable and a non-negative
  continuous variable:
  \begin{align*}
    x_{t,m}^\omega = & 1_{[\pi^{\omega}(s_t) = m]} = \begin{cases}
      \; 1 & \; \text{ if } \omega \text{ selects action } m \text{ at time } t \\
      \; 0 & \; \text{ otherwise }
    \end{cases} \\
    \underline y_{t,i,j}^\omega = &
    \begin{cases}
      \; - g^\omega_{i,j} & \text{ if } g^\omega_{i,j} < 0 \\
      \; 0 & \text{otherwise}
    \end{cases} \\ 
    \overline y_{t,i,j}^\omega = &
    \begin{cases}
      \; g^\omega_{i,j} & \text{ if } g^\omega_{i,j} \ge 0 \\
      \; 0 & \text{otherwise}
    \end{cases}
  \end{align*}
  where \neww{$g^\omega_{i,j} \triangleq \omega \underline{u}_t^i + (1-\omega)
  \overline{u}_t^i - \omega \underline{u}_t^j - (1-\omega)
  \overline{u}_t^j$}.  See that $n_t^m = \sum_{\omega \in \Omega}
  x_{t,m}^\omega n_{t}(\omega)$. 
  Sometimes, we use $x_{t}$ to denote a matrix of $x_{t,m}^\omega$ for all $\omega \in \Omega$,
  $1 \le m \le M$.

It is easy to show there exist a lifted
  polytope $P'$ such that:
  \begin{align}
    \label{eq:Obj}
    \min_{
      s_t 
      \in \R^{2M}} 
    \sum_{m=1}^M \frac{n^m_t}{N} \cdot c_m(n^m_t), \notag \\  s \in P(S_t,\Omega) \\
    = \min_{
      s_t 
      \in \R^{2M},
      \underline y_t, \overline y_t \in \R^{M(M-1)|\Omega|} x_t \in \{0, 1
      \}^{M|\Omega|} } 
   \sum_{m=1}^M \frac{n^m_t}{N} \cdot c_m(n^m_t) \notag \\ (s_t, x_t, \underline y_t, \overline
    y_t ) \in P'(S_t,\Omega),
  \end{align}
  The definition of the polytope $P'$ depends on the policies defined
  by $\Omega$ and the history of signals $S_t$. Specifically:
  \begin{align}
    \omega \underline{u}^i_{t} + (1-\omega) \overline{u}^i_{t} -
    \omega \underline{u}^j_{t} - (1-\omega) \overline{u}^j_{t}
    \leq & \overline y_{t,i,j}^\omega  & \neww{\forall i, j, t, \omega} \\
    \omega \underline{u}^j_{t} + (1-\omega) \overline{u}^j_{t} -
    \omega \underline{u}^i_{t} - (1-\omega) \overline{u}^i_{t}
    \leq & \underline y_{t,i,j}^\omega  & \neww{\forall i, j, t, \omega}  \\
    - Z (1 - x_{t,m}^\omega) \leq &  \underline y_{t,m,i}^\omega \leq Z (1 - x_{t,m}^\omega)  & \neww{\forall t, m, \omega}  \\
    - Z x_{t,m}^\omega \leq &  \overline y_{t,m,i}^\omega \leq Z x_{t,m}^\omega  & \neww{\forall t, m, \omega}  \\
    \overline u_{t}^m \geq & \underline u_{t}^m \geq \min_{j=t-r,\ldots,t} \{ c_m(n^m_j) \}  & \neww{\forall m, r, t}  \\
    & \overline u_{t}^m \leq \max_{j=t-r,\ldots,t} \{ c_m(n^m_j) \} & \neww{\forall m, r, t} \\
    \sum_{m = 1}^{M} x_{t,m}^{\omega} =& 1 & \neww{\forall t, \omega} \\
    \underline y_\omega, \overline y_\omega \geq & 0 & \neww{\forall \omega}
  \end{align}
  where the $\max, \min$ operators are applied to 
  \neww{the revealed realisations of the random variable $n^m_j$, and hence  
  yield constants, rather than bi-level structures.} Further, $Z$ is a sufficiently large constant, \eg 
  $$
  \max_{\substack{m = 1,2,\ldots,M, m'\in\{1,2, \ldots, M \}\setminus
    \{m\}\\ \underline{u}^m_{t},\overline{u}^m_{t},\underline{u}^{m'}_{t},\overline{u}^{m'}_{t}}}
  \{ |\omega \underline{u}^m_{t} + (1-\omega) \overline{u}^m_{t} -
  \omega \underline{u}^{m'}_{t} - (1-\omega) \overline{u}^{m'}_{t}| \}
  \notag \leq \max_x \{ C(x) \}.  $$

  The integer component can be solved by branching, whereby the
  Lagrangian gives us an unconstrained relaxation of the original
  problem.  Hence, by Proposition~\ref{prop:DCA}, the \neww{stationary point can be computed} 
   up to any precision in finite time.
\end{proof}

\begin{proposition}[The Distributionally Robust Optimum]
   Under Assumption~\ref{as:pop},
   let us consider functions $c_m, 1 \le m \le M$ convex on $[0, 1]$ and    
\neww{
\begin{align}
\label{eq:robust}
  \min_{ (\underline{u}^m_t, \overline{u}^m_t, 1 \le m \le M) \in P(S_t, \Omega)} \sup_{D\sim (E, Q)} 
  C\left( \E_D n_t \right)
\end{align}
}
   where $D\sim (E, Q)$ in the inner optimisation problem suggests optimisation over the infinitely many distribution functions 
   of $\Omega$ with the first two moments of Assumption~\ref{as:pop},
  and $P \in \R^{2M}$ 
  is the set of $r$-supported signals \eqref{eq:supported1}.
  A stationary point of the distributionally robust optimisation problem \eqref{eq:robust} 
  \neww{can be computed up} to any \neww{fixed} precision in finite time.\\[2mm]
\end{proposition}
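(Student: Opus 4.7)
The plan is to show that the inner supremum in \eqref{eq:robust} collapses to a single value that depends only on the first moment $E$, reducing \eqref{eq:robust} to an instance of the full-information problem already handled by Proposition~\ref{thm:Computable}.

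First, I would fix $s_t \in P(S_t,\Omega)$ and compute $\E_D n_t$ via \eqref{eq:29}. For each $m$ the quantity $n^m_t = N\sum_{\omega\in\Omega} 1_{[\pi^\omega(s_t)=m]} \mu_{t+1}(\omega)$ is linear in $\mu_{t+1}$, since the indicator depends only on $s_t$ and $\omega$. Linearity of expectation then gives
$$\E_D n^m_t \;=\; N \sum_{\omega\in\Omega} 1_{[\pi^\omega(s_t)=m]}\, E_\omega,$$
a quantity determined by $E$ alone. Consequently $C(\E_D n_t)$ takes the same value for every $D\sim(E,Q)$, the inner supremum is attained trivially, and \eqref{eq:robust} becomes the deterministic programme $\min_{s_t \in P(S_t,\Omega)} C(\bar n_t)$ with $\bar n^m_t \triangleq N\sum_{\omega} 1_{[\pi^\omega(s_t)=m]} E_\omega$.

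Second, I would observe that this deterministic programme is structurally identical to \eqref{eq:Computable}: the only change is that the realised type count $N \mu_{t+1}(\omega)$ in the linking relation $n^m_t = \sum_\omega x^\omega_{t,m} n_t(\omega)$ of the proof of Proposition~\ref{thm:Computable} is replaced by its mean $N E_\omega$, which is available to the network operator under Assumption~\ref{as:pop}. Every coefficient of the lifted mixed-integer polytope $P'$ of that proof remains explicitly computable, and the binary variables $x^\omega_{t,m}$ together with the big-$Z$ constraints continue to encode the policies $\pi^\omega$ exactly as before.

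Third, I would invoke Proposition~\ref{thm:Computable} verbatim: branching over the finitely many $x^\omega_{t,m}$ yields continuous subproblems whose Lagrangian is an unconstrained minimisation of a difference of convex functions (convexity of each $c_m$ on $[0,1]$ is inherited by hypothesis), so a stationary point is computable up to arbitrary precision in finite time by Proposition~\ref{prop:DCA}.

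The only delicate point is the collapse of the inner supremum, which relies on the objective being $C(\E_D n_t)$ rather than $\E_D C(n_t)$; under the reversed ordering, the nonlinearity of $C$ would introduce genuine dependence on $Q$ and one would instead need moment-based duality in the spirit of \cite{delage2010distributionally}, typically via a semidefinite-programming reformulation of the moment problem. Under the formulation as stated, however, the proposition reduces cleanly to Proposition~\ref{thm:Computable}, and I do not expect any further technical hurdles.
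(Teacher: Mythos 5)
Your argument is correct for the statement as literally written, but it takes a genuinely different and much more elementary route than the paper. The paper does not collapse the inner supremum; instead it invokes the minimax duality of Bertsimas et al.\ (Theorem 2.1 of \cite{bertsimas2010models}) to rewrite the inner moment problem as a semidefinite programme, introducing matrix variables $W_\omega$, vectors $w_\omega$, and scalars $q_\omega$ subject to $\sum_\omega \bigl(\begin{smallmatrix} W_\omega & w_\omega \\ w_\omega^T & q_\omega \end{smallmatrix}\bigr) = \bigl(\begin{smallmatrix} Q & E \\ E^T & 1 \end{smallmatrix}\bigr)$ and positive semidefiniteness, arguing the equality holds for each of the finitely many integer assignments $x_t$; it then branches on $x_t$ and applies the DC decomposition of $C$ exactly as you do in your third step. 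Your route buys simplicity: since $n_t$ is linear in $\mu_{t+1}$, the objective $C(\E_D n_t)$ depends on $D$ only through $E$, so the whole DRO layer is vacuous and the result follows verbatim from Proposition~\ref{thm:Computable} with $\mu_{t+1}$ replaced by $E$. What the paper's SDP machinery buys is the ability to handle an objective that genuinely depends on the second moment --- e.g.\ the worst-case expectation $\sup_D \E_D\, C(n_t)$ --- which is the reading under which the constraint $\E_D[\mu_t\mu_t^T]=Q$ and the variables $W_\omega$ actually matter; you correctly flag this as the delicate point. Indeed, even the paper's own first reformulation step writes the objective purely in terms of $\E_D\mu_t(\omega)$, so the second-moment information is vestigial there too unless the objective is reinterpreted in that stronger sense. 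In short: your proof is valid and simpler for the proposition as stated, while the paper's proof is the one you would need for the non-degenerate formulation; it would strengthen your write-up to state explicitly that under the $\E_D C(n_t)$ reading one must pass through the integer-SDP reformulation rather than Proposition~\ref{thm:Computable} alone.
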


\begin{proof}
  \label{proof4}


  Notice that we can reformulate the problem \eqref{eq:robust} as an
  integer semidefinite program by the introduction of a new decision
  variable $W$ in dimension $|\Omega| \times |\Omega|$, vector $w_{\omega}
  \in \R^{|\Omega|}$, and scalar $q_{\omega}$, in addition to the variables
  introduced in the proof of Proposition \ref{eq:Computable}: 
  \begin{align}
  \min_{ (\underline{u}^m_t, \overline{u}^m_t, 1 \le m \le M) \in P(S_t, \Omega)} \sup_{D \sim (E, Q)} & 
 C\left( \E_D n_t \right) \\
=  \min_{ \substack{
        s_t 
        \in \R^{2M} \\
      x_t \in \{0, 1
      \}^{M|\Omega|} 
 } }  
\sup_{ D \sim (E, Q)} &
    \sum_{m=1}^M \left( \sum_{\omega \in \Omega} \left( x_{t,m}^\omega \E_D \mu_t(\omega)  \right) \cdot c_m \left( \sum_{\omega \in \Omega} x_{t,m}^\omega N \E_D \mu_t(\omega)  \right) \right) \\
\textrm{s.t. }
     & \E_D \mu_t = E \notag \\
     & \E_D \left[ \mu_t \mu_t^T \right] = Q \notag \\
     & s \in P'(S_t, \Omega) \notag \\
    = \min_{ \substack{
        s_t 
        \in \R^{2M}, \\
      \underline y_t, \overline y_t \in \R^{M(M-1)|\Omega|} \\
  x_t \in \{0, 1 \}^{M|\Omega|} \\ 
        W_{\omega} \in \R^{|\Omega| \times |\Omega|}\\ w_{\omega} \in \R^{|\Omega|},
        q_{\omega} \in \R } } 
 &
    \sum_{m=1}^M \frac{\sum_{\omega \in \Omega} \left( x_{t,m}^\omega e_\omega w_\omega \right)}{N} \cdot c_m \left( \sum_{\omega \in \Omega} \left( x_{t,m}^\omega e_\omega w_\omega \right) \right) \\
\textrm{s.t. }
 &     \sum_{\omega \in \Omega} \begin{pmatrix}
      W_\omega & w_\omega \\
      w_\omega^T & q_\omega
    \end{pmatrix} = \begin{pmatrix}
      Q & E \\
      E^T & 1
    \end{pmatrix} \notag \\
 &     \begin{pmatrix}
      W_\omega & w_\omega \\
      w_\omega^T & q_\omega
    \end{pmatrix} \succeq 0 \quad \forall \omega \in \Omega \notag \\
 &     (s_t, x_t, \underline y_t, \overline y_t ) \in
    P'(S_t,\Omega),\notag
  \end{align}
  where $e_{\omega}$ are vectors with only the $\omega^\text{th}$
  entry of $1$ and others $0$.  The first equality
  follows from the definition of $C$ \eqref{eqn:socialtravel time}.
  The second equality follows from the work of Bertsimas et al. \cite{bertsimas2010models}
  on minimax problems, and specifically from Theorem 2.1 therein.
  Although Theorem 2.1 does not consider integer variables explicitly,
  it is easy to see that for each of the $2^{M|\Omega|}$ possible integer values of $x_t$, the equality holds, and hence it holds generically.
  See also the lucid treatment of Mishra et al. \cite{Mishra2012}.

  Computationally, one can apply branching to the integer variables $x_t$, as in the proof of Proposition \ref{thm:Computable}, which leaves one with a semidefinite
  program with a non-convex objective.  
  There, one can formulate the
  augmented Lagrangian, which is non-convex, but well-studied \cite{Stingl2009,kovcvara2003pennon,lanckriet2009convergence,yen2012convergence}.
  For instance, it can be reformulated to a ``difference of convex'' form and Proposition~\ref{prop:DCA} can be applied.
  Let us multiply $C(\cdot)$ by $N$ to study the $2 M$ terms one by one. 
  We want to show that the rest is a sum of a convex and concave terms.
  Let us see that for $i = 1, 2, \ldots, M - 1$,
  we have the term $n_t^i c_i(n_t^i)$,
  which is convex in $n_t^i$, considering that for convex and non-decreasing $g$ and convex $f$, we know $g(f(x))$ is convex.
  For $i = M$, we have the terms
  $c_i( 1-\sum_{i = 1}^{M-1} n_t^i)$
  and a $M - 1$ terms from 
  $- (\sum_{i = 1}^{M-1} n_t^i) c_i( 1-\sum_{i = 1}^{M-1} n_t^i)$.
  Considering that convexity is preserved by affine substitutions of
  the argument, the former term is convex for the affine subtraction and convex
  $c_i$.
  Considering the additive inverse of a 
  convex function is a concave function, we see $ - n_t^i n_t^M(\cdot)$ is
  concave.  
  The proposition
  follows from the following Proposition~\ref{prop:DCA}.
\end{proof}

Alternatively, one may consider
polynomial functions $c_m$, where the minimum of the social cost $C$ can be \neww{computed} up
  to any \neww{fixed} precision in finite time by solving a number of instances of semidefinite 
  programming (SDP).

\section{A computational illustration}

For optimisation problems such as \eqref{eq:Computable} and \eqref{eq:robust}, there are solvers based on sequential convex
programming with known rates of convergence
\cite{lanckriet2009convergence,yen2012convergence}.  
In our computational experiements, we have extended a
sequential convex programming solver of Stingl et
al. \cite{Stingl2009}, which handles polynomial semidefinite programming of \eqref{eq:robust},
to handle mixed-integer polynomial semidefinite programming.
Specifically, Stingl et al. \neww{replace} nonlinear objective functions
by block-separable convex models, 
following the approach of Ben-Tal and Zhibulevsky \cite{ben1997penalty} and Ko{\v
  c}vara and Stingl \cite{kovcvara2003pennon}.

In our experiments, we have considered the same set-up as in \cite{marecek2016signaling},
where $M=2$ and 
two Bureau of Public Roads (BPR) functions are used for the 
costs, as presented in Figure~\ref{fig:ushaped5settings}.
The population is given by 
$\Omega = \{ 0, 0.5, 1, \textrm{Uniform}(0, 1), \textrm{Uniform}(0, 1) \}$, 
the initial signal is $s_1 = (0.5, 1, 0.6, 0.9)$, and 
$\kappa = 0.15$,
$\mu_t(\omega) \sim \textrm{Uniform}(1/5 - \kappa, 1/5 + \kappa) \; \forall \omega \in \Omega, t > 1$, 
and $N=30$. 
\neww{
These settings have been chosen both for the simplicity of reproduction as well as to allow for comparison 
with plots presented in \cite{marecek2015signaling,marecek2016signaling}.
}

\neww{
Figure~\ref{fig:ushaped5simulations2} illustrates the cost $\{C(n_t)\}$ over time $1 \le t \le 20$,
 for three lengths $r$ of the look-back, $r = 1$ (top), $r = 3$ (middle), $r = 5$ (bottom), 
 with error bars at one standard deviation capturing the variability over the sample paths.
It seems clear that the $r$-supported scheme (in dark blue, Eq. \ref{eq:robust}) is only marginally worse than  
the full-information optimum (in red, Eq. \ref{eq:Computable}), which is ``pre-scient'' and hence impossible to operate in the real-world.
Also, it seems clear that for low values of $r$, there is not enough data to estimate the second moments,
and hence the use of the first moment (in green) behaves similarly to the use of the first two moments (in dark blue).
Both compared to the use of the first moment and to the previously proposed $r$-extreme scheme (in light blue),
the $r$-supported scheme yields costs with less prominent extremes,
even after averaging over the sample paths. 
}

\neww{
Further, Figure~\ref{fig:ushaped5simulations1} illustrates the the process $\{C(n_t)\}$ averaged over $1 \le t \le 20$ for varying $r$, again with error bars at one standard deviation.
It shows that employing 
 $r$-supported scheme (in dark blue, Eq. \ref{eq:robust})
 allows for a reduction of the social cost,
 when compared to $r$-extreme singalling (in light blue), 
 across a range
of the length $r$ of the look-back interval.
Again, it seems clear that the $r$-supported scheme (in dark blue) is only marginally worse than  
the full-information optimum (in red, Eq. \ref{eq:Computable}).
}

\begin{figure}[t!]
  \includegraphics[clip=true,width=0.49
  \textwidth]{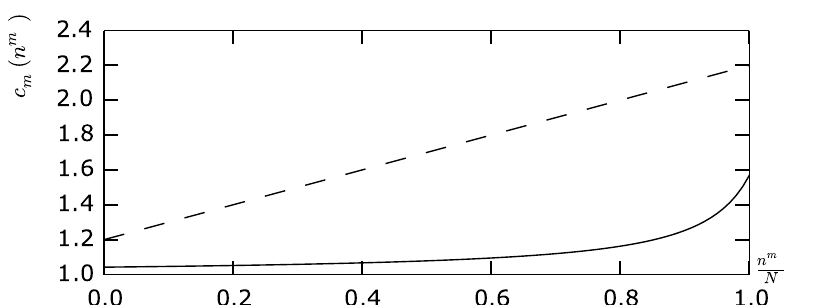}
  \includegraphics[clip=true,width=0.49
  \textwidth]{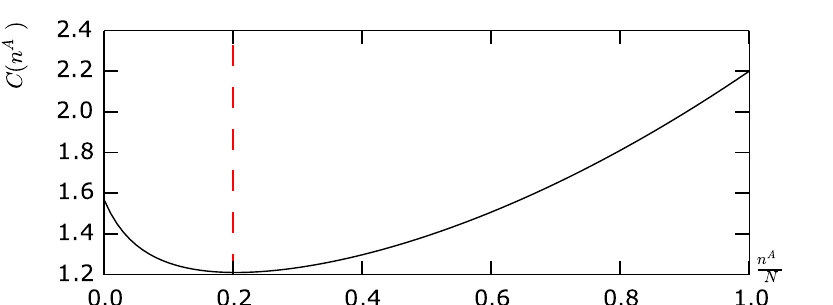}\\[1mm]
                
  \caption{ A trivial example with $M=2$. Left: Cost functions $c_1(x)
    \triangleq 2 (1 + 3.6 x^4)$ and $c_2(y) \triangleq 5 (1 + 0.8 y^2)$.
    Right: The corresponding social cost.
}
  \label{fig:ushaped5settings}
\end{figure}

\begin{figure}[t!]
\begin{centering}
\centering
  \includegraphics[clip=true,width=0.8\textwidth]{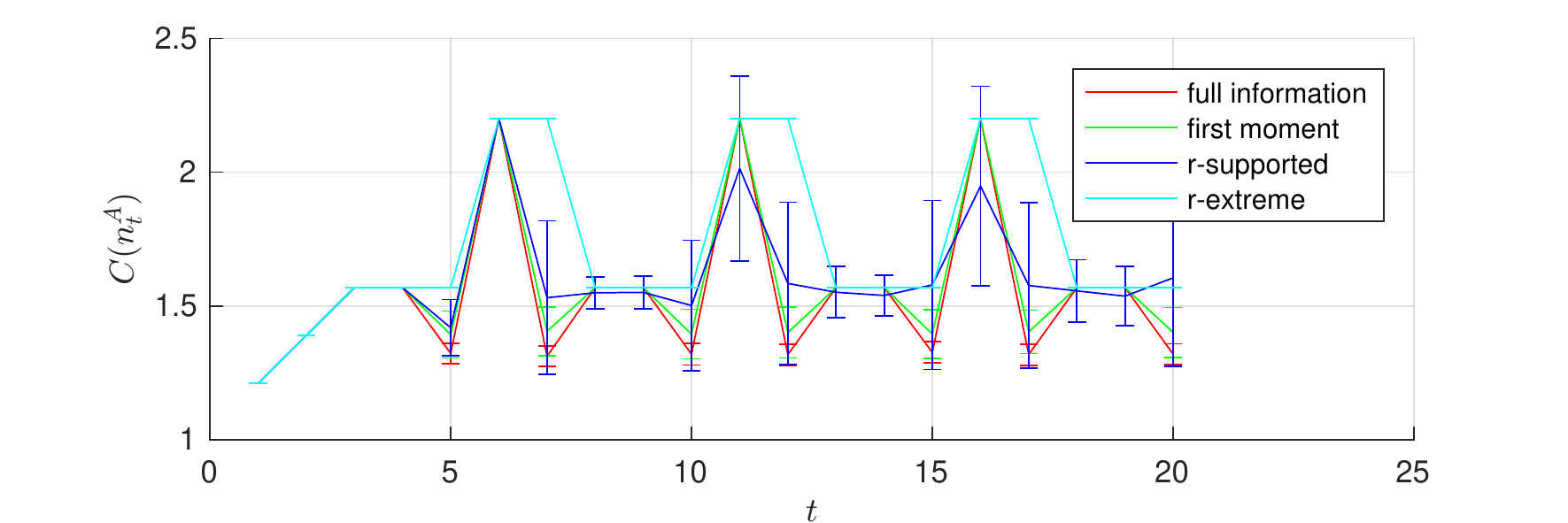}\\
  \includegraphics[clip=true,width=0.8\textwidth]{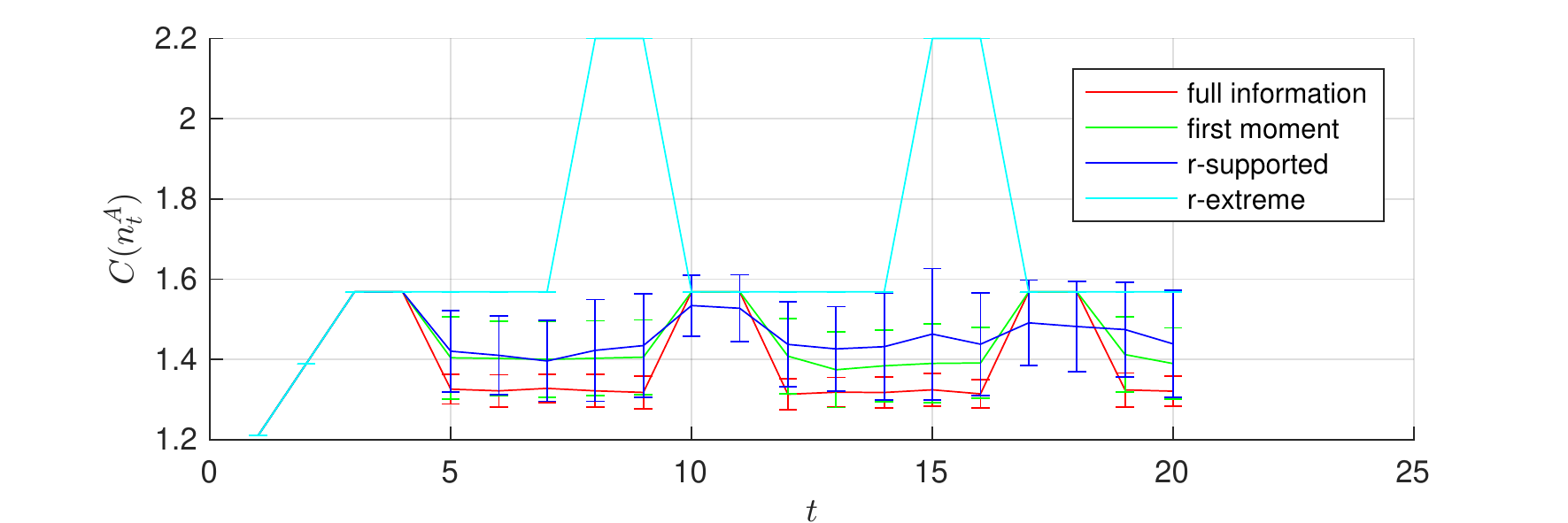}\\
  \includegraphics[clip=true,width=0.8\textwidth]{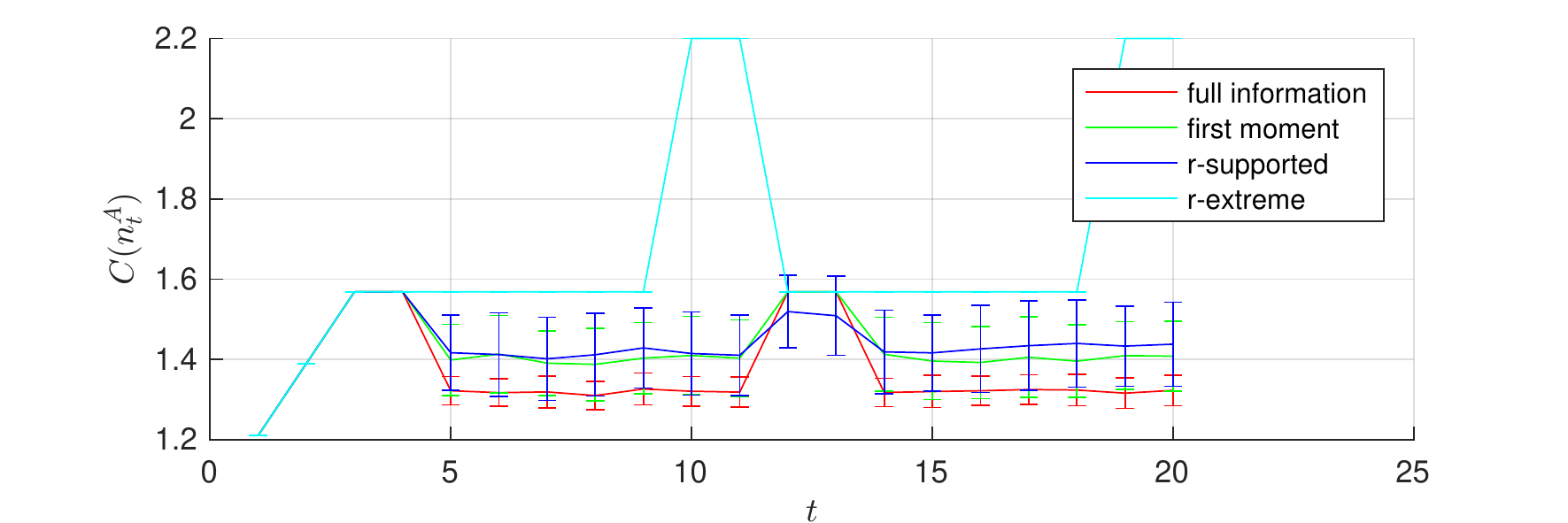}\\
\end{centering}
                \caption{ $r$-supported scheme (in dark blue) compared to the 
``pre-scient'' full information optimum (in red), the use of the first moment (in green), and the previously proposed $r$-extreme scheme (in light blue): 
The process $\{C(n_t)\}$ over time for $r = 1$ (top), $r = 3$ (middle), $r = 5$ (bottom) with error bars at one standard deviation across the sample paths.
}
  \label{fig:ushaped5simulations2}
\end{figure}

\begin{figure}[t!]
\begin{centering}
\centering
  \includegraphics[clip=true,width=0.8\textwidth]{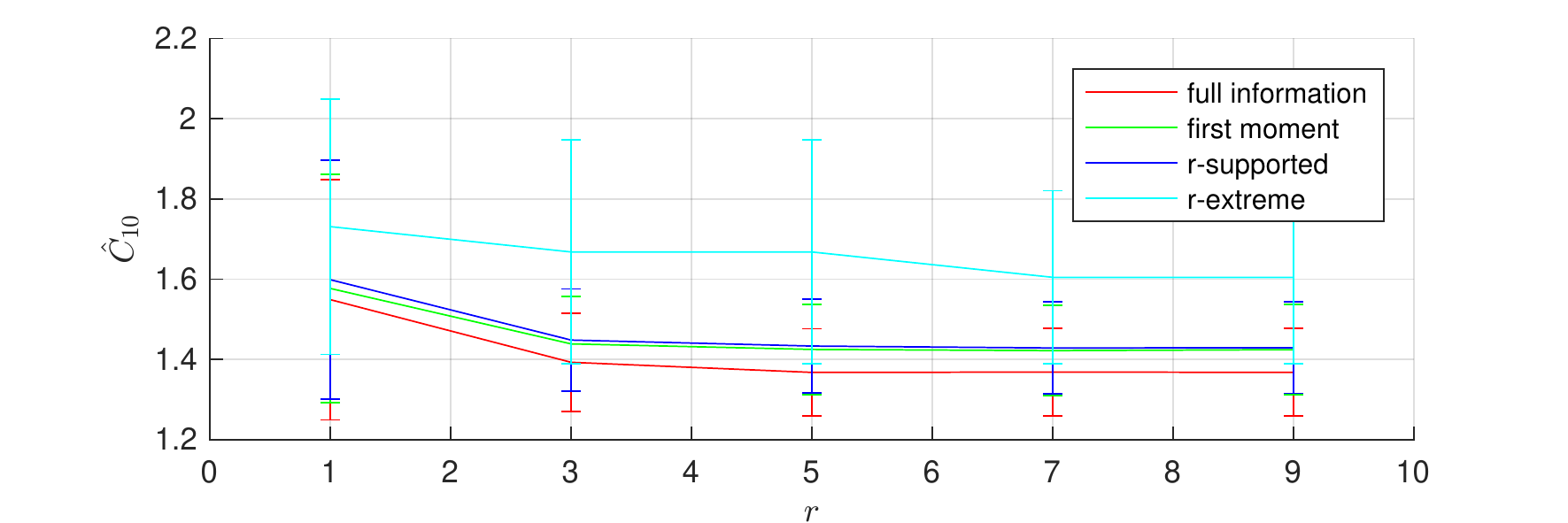}\\
\end{centering}
                \caption{ $r$-supported scheme (in dark blue) compared to the 
``pre-scient'' full information optimum (in red), the use of the first moment (in green), and the previously proposed $r$-extreme scheme (in light blue): 
The process $\{C(n_t)\}$ averaged over $1 \le t \le 10$ for varying $r$ with error bars at one standard deviation across the sample paths.
}
  \label{fig:ushaped5simulations1}
\end{figure}

Finally, we note that the for 
The stationary point \eqref{eq:Computable} can be \neww{computed up} to \neww{precision} $10^{-6}$ in about 15 seconds on a basic laptop with Intel i5-2520M, 
  although the run-time does increase with the number of routes. 
\neww{This is much more than the run-time of the previously proposed $r$-extreme scheme. An efficient implementation of the $r$-supported scheme remains a major challenge for future work.}

\section{Conclusions}

In conclusion, \neww{there are multiple ways of introducing} ``uncertainty'' into \neww{the behaviour of the road user in terms of the route choice.} 
Previously, the addition of zero-mean noise with a positive variance $\sigma$ \cite{marecek2015signaling}, broadcasting intervals such as $(\delta,\gamma)$ intervals \cite{marecek2015signaling} and 
 $r$-extreme intervals (minima and maxima over a time window of size $r$) \cite{marecek2016signaling}, and intervals based on exponential smoothing \cite{Epperlein2016},
 have been shown result in the distribution of drivers over the road network converging over time, 
 under a variety of assumptions about the evolution of the population over time.
This paper studied the optimisation of the social cost over sub-intervals within the minima
  and maxima over a time window of size $r$, under a variety of assumptions.

This paper is among the first applications of distributionally robust optimisation (DRO) 
in transportation research.
while other recent work considered its use in stochastic traffic assignment \cite{Ahipasaoglu2015},
where it presents a tractable alternative to multinomial probit \cite{Mishra2012},
and in traffic-light setting \cite{Liu2015536}.
We envision there will be a wide variety of further studies,
 once the power of DRO is fully appreciated in the community.

This work opens a number of questions in cognitive science, multi-agent systems, 
artificial intelligence, and urban economics.  
How do humans react to intervals, actually?  
How to invest in transportation infrastructure, knowing that information provision can be co-designed to suit the infrastructure?
Future technical work may include the study of variants of the proposed scheme, such as 
broadcasting $s_t$ such that 
\begin{align}
  \avg_{j=t-r,\ldots,t} \{ c_m(n^m_j) \} \geq \underline u_{t}^m &\geq \min_{j=t-r,\ldots,t} \{ c_m(n^m_j) \},\\
  \avg_{j=t-r,\ldots,t} \{ c_m(n^m_j) \} \leq \overline u_{t}^m &\leq \max_{j=t-r,\ldots,t} \{ c_m(n^m_j) \},
\end{align}
where $\avg$ denotes the average.
One could also employ risk measures such as value at risk (VaR) and conditional value at risk (CVaR) 
 for a given coefficient $\alpha$ and distribution function $L$ with support $\{ c_m(n_t^m) \}_t$,
 as suggested in  Table~\ref{tab:int-signals}.
Further studies of (weak) convergence properties \cite{marecek2016signaling,Epperlein2016}, including the rates of convergence,
 and further developments of the population dynamics \cite{Epperlein2016} would also be most interesting.
Beyond transportation, one could plausibly employ similar techniques in related resource-sharing problems (e.g., ad keyword auctions,
 dynamic pricing in power systems, announcements of emergency evacuation routes)
in order to improve the variants of social costs therein.

\section{Acknowledgement} This research received funding from the European Union Horizon 2020 Programme (Horizon2020/2014-2020) under grant agreement number 688380.
Robert Shorten has been funded by Science Foundation Ireland under grant number 11/PI/1177.

\bibliographystyle{is-plain}
\bibliography{traffic}

\end{document}